\newtheorem{theorem}{Theorem}[section]
\newtheorem{lemma}[theorem]{Lemma}
\newtheorem{corollary}[theorem]{Corollary}
\theoremstyle{remark}
\theoremstyle{definition}
\numberwithin{equation}{section}
\DeclareMathOperator{\Cdb}{{\mathbb C}}
\begin{document}

\title{A noncommutative  Bishop peak interpolation-set theorem}
\author{David P. Blecher}
\address{Department of Mathematics, University of Houston, Houston, TX
77204-3008, USA}
\email[David P. Blecher]{dpbleche@central.uh.edu}

\thanks{DB  is supported by a Simons Foundation Collaboration Grant.  }

\begin{abstract}  We prove a noncommutative version of  Bishop's peak interpolation-set theorem.  \end{abstract}

\maketitle

\section{Introduction}

For us, an operator algebra is a norm closed algebra of operators on a Hilbert space, or equivalently a closed subalgebra $A$ of a $C^*$-algebra $B$.
In this paper we shall assume for simplicity that $A$ and $B$ share a common identity element. 
In `noncommutative peak interpolation' as surveyed briefly in \cite{Bnpi} say, one generalizes the classical peak interpolation theory to the setting of
operator algebras, using Akemann's noncommutative topology (see \cite{Ake2} and e.g.\ other references  in \cite{Bnpi}).
In classical peak 
interpolation\footnote{When we use the term `peak interpolation' we mean in this sense.  Others sometimes use this term to refer to what we call peak interpolation-sets below.}
 the setting is a subalgebra $A$ of $B = C(K)$, the continuous scalar functions on a compact Hausdorff space $K$, and one tries to build
functions in $A$ which have prescribed values or behaviour on a fixed closed subset $E \subset K$ (or on 
several disjoint subsets).   The sets that  `work' for this are the $p$-sets, namely the  intersections of {\em peak sets}, where the latter term means a 
set of form $f^{-1}(\{ 1 \})$ for $f \in A, \Vert f \Vert = 1$ (in the separable case, they are just the $p$-sets).   
A typical `peak interpolation result',
says that if $f
\in C(K)$ is strictly positive, and $E$ is a $p$-set, then the continuous functions  $g$ on $E$  which are restrictions of 
functions in $A$, and which are dominated in modulus 
by the `control function' $f$ on $E$, have extensions $h$ in $A$ satisfying $|h| \leq f$ on 
all of $K$ (see 
 e.g.\ II.12.5 in \cite{Gam}; there are nice pictures illustrating this result in \cite{Bnpi}).    We shall call this the `Gamelin-Bishop theorem' below.   
 The special case of this where $g = 1$ in fact characterizes $p$-sets among the closed subsets of 
$K$ (e.g.\ see \cite{Gamr}).    Combining this with a result of Glicksberg \cite[Theorem II.12.7]{Gam}), one obtains that the $p$-sets are the closed sets $E$ in  
$K$ with $\mu_E \in A^\perp$ for all measures $\mu \in A^\perp$.   Equivalently, if and only if the characteristic function $\chi_E$ is in $A^{\perp \perp}$.

An {\em interpolation-set} for $A$ is a closed set $E \subset K$ such that $A_{|E} = C(E)$.    A  {\em  peak interpolation-set} (resp.\ $p$-{\em interpolation set})
is a peak (resp.\ $p$-) set which is also an  interpolation-set.    In the light of the above it is clear and obvious that the $p$-interpolation sets may be characterized
by the appropriate variant of the Gamelin-Bishop theorem above: if  $f
\in C(K)$ is strictly positive, then the continuous functions  $g$ on $E$ which are dominated in modulus
by the `control function' $f$ on $E$, have extensions $h$ in $A$ satisfying $|h| \leq f$ on 
all of $K$.   There are other characterizations of $p$-interpolation sets, e.g.\ as the closed sets  in  
$K$ with $\mu_E =0$ for all measures $\mu \in A^\perp$.   By basic measure theory, 
the latter is equivalent to: $|\mu|(E) = 0$ for all such $\mu$.   
By the {\em Bishop peak interpolation-set theorem} we shall mean the result that Bishop proved in 
\cite{Bish}: namely that  if $\mu_E =0$ for all measures $\mu \in A^\perp$ then the extension theorem stated a few lines back involving $f, g,h,$ holds.

A special case of interest  is 
when $f$ above is $1$, and when this case  is applied to the 
disk algebra, together with the F $\&$ M Riesz theorem,  one obtains the famous Rudin-Carleson theorem
(see e.g.\ II.12.6 in \cite{Gam}).   

The noncommutative analogue of $p$-sets are the $p$-projections.   Analogously to the classical case above they have been characterized as 
the infima (`meet') of peak projections, or as the closed projections in $B^{**}$ that lies in $A^{\perp \perp}$, if $A$ is a unital subalgebra of a $C^*$-algebra $B$ (see the start of Section 2
for references).
See \cite{Bnpi} for a survey of part of the `noncommutative peak set'  theory, and see also references therein for dozens of our other results.  
More recently Davidson and Clou\^atre and Hartz and others, have studied forms of 
 noncommutative peak interpolation-sets in specific classes of operator algebras (see e.g.\ \cite{DH,CD,CD2,CT,CTh} and references therein).    Their aim is often to generalize to such classes the classical theory of interpolation sets 
 for the ball algebra (see \cite[Chapter 10]{Rudin}), the Rudin-Carleson theorem, etc.   This work has strong connections to our general `noncommutative peak interpolation' theory (from \cite{Bnpi} and references therein, or in the present paper), indeed one of the main theorems in \cite{DH} follows quickly from our more general theory as we mention at the end of this Introduction.
 Exploring these connections raises many interesting questions and should lead to further important progress.
 
The following is the unital case of a very general noncommutative variant of the Gamelin-Bishop theorem \cite[Theorem 3.4]{Bnpi}:  

\begin{theorem} \label{peakthang22}    \cite{Bnpi}  Suppose that $A$ is a subalgebra of a unital $C^*$-algebra $B$, with $1_B \in A$.  Suppose
 that  $q$ is a closed projection in $B^{**}$ that lies in $A^{\perp \perp}$.
   If $b \in A$ with $b q= qb$,
 and $q b^* b q  \leq q d$ for an invertible positive $d \in B$ which commutes with $q$,
then  there exists an element $a \in A$ with $a q =  q a = b q$, and $a^* a \leq d$.
\end{theorem}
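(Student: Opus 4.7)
The plan is to adapt the classical Bishop peak interpolation argument to the noncommutative setting by combining a peaking net for the closed projection $q$ with an iterative correction scheme. As a preliminary reduction I would replace $d$ by $d + \varepsilon 1_B$ and first solve for $a_\varepsilon \in A$ with $a_\varepsilon q = q a_\varepsilon = bq$ and $a_\varepsilon^* a_\varepsilon \leq d + \varepsilon 1_B$; the original statement then follows by taking a weak-$*$ cluster point of $(a_\varepsilon)$ inside the weak-$*$ compact set $\{x \in B^{**} : x^* x \leq d + 1_B\}$ and arguing that this cluster point in fact sits in $A$.

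The structural input used throughout is that $q$ is a closed projection in $A^{\perp\perp}$. By the characterization of such projections referenced in the Introduction, this yields an increasing net $(e_\lambda)$ of positive contractions in $A$ with $e_\lambda q = q e_\lambda = 0$ and $e_\lambda \nearrow 1-q$ weak-$*$; the elements $1-e_\lambda \in A$ act as noncommutative Urysohn peaking functions for $q$, and they will provide the building blocks for all subsequent corrections.

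The heart of the argument is an iterative construction. Starting from $a_0 = b$, I would produce $a_{n+1} = a_n + u_n$, where $u_n \in A$ is chosen so that $u_n q = q u_n = 0$ (so that the interpolation condition $a_n q = q a_n = bq$ is preserved throughout), $\|u_n\| \leq C \delta_n^{1/2}$, and the positive part $(a_{n+1}^* a_{n+1} - d)_+$ has norm at most $\delta_n/2$. Since $q(a_n^* a_n - d) q \leq 0$ by the running hypothesis, the overshoot of $a_n^*a_n$ over $d$ is supported in the complement of $q$, and $u_n$ is built by cutting $a_n$ down on that spectral region using suitably chosen $(1 - e_\lambda)$ factors on either side, rescaled via the invertibility of $d$. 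The series $\sum_n u_n$ then converges absolutely in norm to give the sought $a \in A$.

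The main obstacle is producing each $u_n$ as a genuine element of $A$ while simultaneously achieving the spectral bound and the norm estimate: the natural construction, via continuous functional calculus in $a_n^* a_n$, lives in $B^{**}$ and not in $A$ (which is only an algebra, not a $*$-algebra). Overcoming this requires approximating through the peaking net together with a Cohen--Hewitt-style factorization in the left ideal $\{x \in A : x q = 0\}$, whose existence is guaranteed by the closed-projection hypothesis on $q$. Matching the rate at which $1-e_\lambda$ peaks at $q$ with the spectral decay of $(a_n^* a_n - d)_+$, while keeping $\|u_n\|$ geometrically small, is the most delicate part of the bookkeeping.
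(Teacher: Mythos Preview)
This paper does not contain a proof of Theorem~\ref{peakthang22}: the result is quoted verbatim from \cite{Bnpi} (it is the unital case of Theorem~3.4 there) and is used here only as a black box in the proof of Theorem~\ref{peakthang}. So there is no ``paper's own proof'' to compare your attempt against.

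That said, your sketch has a genuine gap already in the preliminary reduction. A weak-$*$ cluster point of a bounded family $(a_\varepsilon)_{\varepsilon>0}\subset A$ taken inside $B^{**}$ lands in $A^{\perp\perp}$, not in $A$; the set $\{x\in B^{**}:x^*x\le d+1_B\}$ is weak-$*$ compact in $B^{**}$, but its intersection with $A$ is not. Nothing in your outline forces the cluster point back into $A$, and there is no reason to expect norm convergence of the $a_\varepsilon$ without further argument. This step must either be replaced by a direct argument for general strictly positive $d$, or by a proof that the $a_\varepsilon$ can be chosen to be norm-Cauchy.

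The iterative step has a similar, and as you acknowledge more serious, gap: the correction $u_n$ you want is produced by functional calculus in $a_n^*a_n$, hence lives in $C^*(a_n)\subset B$, not in the (typically non-selfadjoint) algebra $A$. Sandwiching by $1-e_\lambda$ on both sides does give elements of $A$, but those elements need not kill the positive part $(a_n^*a_n-d)_+$; conversely, the spectral cut that does kill it need not factor through $A$. The argument in \cite{Bnpi} avoids this by first conjugating by $d^{-1/2}$ to reduce to $d=1$ and then invoking the noncommutative Urysohn/peak machinery developed in \cite{BHN,BRI,BRII,BNcp}, rather than attempting a direct Bishop-style iteration inside $A$. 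Your outline does not supply the missing mechanism, and the phrase ``Cohen--Hewitt-style factorization in the left ideal $\{x\in A:xq=0\}$'' is not enough: that ideal need not have a bounded approximate identity of the kind Cohen factorization requires, and even when it does, factorization alone does not give the spectral control you need on $a_{n+1}^*a_{n+1}-d$.
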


This result may fail without the `commuting hypothesis'  $b q= qb$, and even in the case $d = 1$.   See \cite[Corollary 2.4]{BRII} for an example of a peak projection $q$
and $b \in A$ with $\| b q \| \leq 1$, but there is no $a \in {\rm Ball}(A)$ with $aq = bq$.   Thus as stated after that result,  ``Clearly the way to proceed from this point, in noncommutative peak interpolation, is to insist on a commutativity assumption of [this] type".    Let us  apply this principle in an attempt to find a noncommutative version of the 
classic Bishop peak interpolation-set theorem above.   We first note that \cite[Propositions 3.2 and 3.4]{Hay} may be regarded as such, however there is an $\epsilon$ parameter there that one does not see in Bishop's result.
Our next observation is that simple noncommutative versions of the Bishop peak interpolation-set theorem  follow immediately from Theorem \ref{peakthang22}  and  \cite[Proposition 3.4]{Hay}.
Indeed right at the end of \cite{Bnpi} we alluded to this: ``Finally, we remark that simple Tietze theorems of the flavour of the Rudin-Carleson theorem mentioned on the first page, follow from our interpolation theorems by adding a hypothesis of the kind in Proposition 3.4 of'' \cite{Hay}. 
For example suppose that $q$ is in the center of $B^{**}$. 
 If $A^\perp \subset (q B)^\perp$, and $b \in B$  then  \cite[Proposition 3.4]{Hay} provides $a \in A$ with 
$aq = bq = q a$ and $q b^* b q = q a^* a q$.  Thus we may apply Theorem \ref{peakthang22}  to obtain  $g \in A$ with $g q =  q g = b q = aq$, and $g^* g \leq d$ as desired.  
However the assumption that $q$ is central in $B^{**}$ is very strong.   If $q$ is not central but $bq = qb$ then one may modify the argument above, but we were unable to see at the time of \cite{Bnpi} how to get the desired conclusion without adding a strong or  unappealing hypothesis.  For example if $A$ is commutative (which is the case in many results discussed in e.g.\ \cite{CD,CD2,DH}, and of course it does not imply that $B$ is also commutative) then this modified argument works (see the proof of Corollary \ref{peakthstr} (5) below). 

In any case,  the desired noncommutative Bishop peak interpolation-set theorem  suggested by the above would be      a characterization of the 
closed projections $q$  in $A^{\perp \perp}$ with the following property:
If $b \in B$ with $b q= qb$,  and $q b^* b q  \leq q d$ for an invertible positive $d \in B$ which commutes with $q$,
then  there exists an element $a \in A$ with $a q =  q a = b q$, and $a^* a \leq d$.   
In the present paper we supply such a theorem.

 Turning to notation, the reader is referred for example to \cite{BLM,BHN,BRI,Bnpi} for more details on
some of the topics below if needed.  We will use silently the fact from basic analysis that
$X^{\perp \perp}$ is the weak* closure in $Y^{**}$ of
a subspace  $X \subset
Y$, and is isometric to $X^{**}$.   For us a {\em projection}
is always an orthogonal projection.   If $A$ is a unital 
operator algebra then its 
second dual $A^{**}$ is  a unital operator algebra with its (unique)
Arens product, this is also the product inherited from the von Neumann
algebra $B^{**}$ if $A$ is a subalgebra of a $C^*$-algebra $B$. Via semicontinuity, it is natural 
to declare a projection  $q \in B^{**}$ to be {\em open} if it is a increasing (weak*) limit of 
positive elements in $B$, and closed if its `perp'  $1-q$ is {\em open}.   We write $\chi_E$ for
the characteristic function of $E$.   In the 
case that $B = C(K)$, and $E$ is an open or  closed set
 in $K$, the projection $q = \chi_E$  may be viewed as an element of $C(K)^{**}$ in a natural way since 
$C(K)^*$ is a certain space of measures on $K$.     Thus if $B = C(K)$
the open or closed projections are precisely the   characteristic functions of  open or  closed sets.

{\bf Acknowledgements:} We thank Ken Davidson and Michael Hartz for several stimulating conversations, some of which are explicitly referenced 
 in the body of this paper.   This project was initiated during the IWOTA Conference at Lancaster in 2021 when Davidson asked us
 if we had a noncommutative Bishop peak interpolation-set theorem.  
 We replied by mentioning the observation in the second last paragraph and the quote from the end of \cite{Bnpi} stated there, and this led to a correspondence in 2021  involving a 
 variant of the present manuscript.  As we said above, the connections between our `noncommutative peaking theory' and the  work of Clou\^atre, Davidson, Hartz, Timko and others, raise many interesting questions to explore further.  For example using our noncommutative peak theory (and results of Hay, etc) and some additional work
 to prove some operator algebraic peak interpolation results in \cite{DH}.  Indeed in the course of our discussions Michael Hartz developed some ideas for the latter (and Michael and Ken had many other helpful perspectives and ideas).
Shortly thereafter  Raphael Clou\^atre was able to find a very short argument showing how Theorems 1.4 and 8.1 in \cite{DH}
follow from our earlier peak interpolation theorem (surveyed above or  in \cite{Bnpi}).  We thank him for his careful 
 explanation.  

\section{On the  `Bishop peak interpolation-set' result}

In this paper $A$ is a subalgebra of a unital $C^*$-algebra $B$ with $1_B \in A$.   Noncommutative peak sets for $A$ 
were introduced in the thesis of Damon Hay \cite{Hay}.
 A projection $p \in A^{**}$ is called
open with respect to $A$ or $A$-open if   there exists a net $x_t \in A \cap p A^{**} p$ with $x_t \to p$ weak* (see  \cite[Section 2]{BHN}).
It is $A$-closed if $1-p$ is $A$-open.    These coincide with  the projections in $B^{**}$ that are open or closed in the $C^*$-algebra sense,
that also lie in $A^{\perp \perp}$ \cite[Theorem 2.4]{BHN}. They also coincide with the $p$-projections for $A$   (see e.g.\ \cite[Theorem 1.2]{BRI} and \cite[Theorem 3.4]{BNcp}; these results have also been generalized to Jordan operator algebras 
  by the author and Neal).

\begin{lemma} \label{qinz} Suppose that $A$ is a unital operator algebra, 
a subalgebra of a unital $C^*$-algebra $B$.
 Let $q$ be a closed projection in $A^{**}$.  If $A_0$ is the subalgebra
 $\{ a \in A : aq = qa \}$ then $q$ is a closed projection in $A_0^{**}$
 (and in $B_0^{**}$). 
Moreover, $q B_0$ and $q A_0$ are norm closed.
\end{lemma}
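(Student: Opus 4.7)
I would handle the three assertions separately.

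\emph{Step 1 (\,$q$ closed in $A_0^{**}$ and $B_0^{**}$).} Since $1-q$ is $A$-open, pick a net $(x_t)$ in $A \cap (1-q)A^{**}(1-q)$ with $x_t \to 1-q$ weak*. The sandwiching $(1-q)x_t(1-q) = x_t$ forces $qx_t = x_t q = 0$, so each $x_t$ lies in $A_0$, and in fact in $(1-q)A_0^{**}(1-q)$. Since $1 \in A_0$ we get $q \in A_0^{**}$, and the same net witnesses $1-q$ as $A_0$-open; hence $q$ is $A_0$-closed. The identical argument works in $B_0$ (using the same net, via $A \subset B$).

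\emph{Step 2 (\,$qB_0$ norm closed).} Taking adjoints in $bq = qb$ shows $B_0$ is a unital $C^*$-subalgebra of $B$. A short direct computation, sliding $q$ past elements of $B_0$, shows that $\phi : B_0 \to qB^{**}q$ defined by $\phi(b) = qbq = qb$ is a $*$-homomorphism into the corner $C^*$-algebra $qB^{**}q$. Its image $qB_0$ is therefore a $C^*$-subalgebra of $qB^{**}q$, and in particular norm closed.

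\emph{Step 3 (\,$qA_0$ norm closed).} The plan is to identify $qA_0$ isometrically with the operator-algebra quotient $A_0/I_0$, where
$$ I_0 := A \cap (1-q)A^{**}(1-q) = \{ a \in A_0 : qa = aq = 0 \}$$
is a norm-closed two-sided ideal of $A_0$. The easy direction, $\|qa\| \le \|a + I_0\|$, is immediate from $qa = q(a+i)$ for $i \in I_0$. For the reverse, fix $\eps > 0$ and apply Theorem \ref{peakthang22} with $b = a$ and $d := (\|qa\|^2 + \eps)\cdot 1_B$: this $d$ is a positive invertible central scalar, and $q a^* a q = (aq)^*(aq) \le \|aq\|^2 q < qd$. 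The theorem produces $g \in A$ with $gq = qg = aq$ and $\|g\|^2 \le \|qa\|^2 + \eps$. Then $g \in A_0$, $g - a \in I_0$, and $\|a + (g-a)\| = \|g\|$, giving $\|a + I_0\| \le \|qa\|$. Hence $a + I_0 \mapsto qa$ is an isometric isomorphism $A_0/I_0 \to qA_0$, and since $A_0/I_0$ is complete, $qA_0$ is norm closed.

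The only nontrivial ingredient is the reverse quotient-norm inequality in Step 3, and this is precisely where Theorem \ref{peakthang22} is essential; the first two assertions are essentially formal.
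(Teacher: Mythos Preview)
Your proof is correct. Step~1 matches the paper's argument almost verbatim (the paper also takes the net $(x_t)$ and observes $qx_t = x_tq = 0$, concluding $1-q$ is $A_0$-open). Your Step~2 via the $*$-homomorphism $b \mapsto qb$ is actually more elementary than what the paper does here; the paper instead applies \cite[Proposition~3.1]{Hay} uniformly to both $qB_0$ and $qA_0$. The genuine divergence is Step~3. The paper notes that once $q \in A_0^{\perp\perp}$ (from Step~1) one has $qA_0 \subset A_0^{\perp\perp}$, hence $A_0^\perp \subset (qA_0)_\perp$, and then \cite[Proposition~3.1]{Hay} immediately gives that $qA_0$ is closed. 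Your route through the isometric identification $A_0/I_0 \cong qA_0$ is valid, but the reverse quotient-norm inequality costs you the full strength of Theorem~\ref{peakthang22}, a much heavier noncommutative peak-interpolation result than Hay's closedness criterion. What you gain is the explicit isometric description of $qA_0$ as a quotient; what the paper's approach buys is economy---the lemma stays at the level of soft functional analysis and does not need to anticipate the interpolation machinery that the lemma is meant to feed into.
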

\begin{proof}  
Indeed suppose that  $x_t \in A \cap (1-q) A^{**} (1-q)$ with $x_t \to 1-q$ weak*. 
  Then $q x_t = x_t q = 0$ so that $1-x_t \in A_0$ and $1-x_t \to q$ weak*.
So $q$ and $1-q$ are in $A_0^{\perp \perp}$ and $1-q$ is an open projection for $A_0$
by the latter mentioned definition.

It follows from 
\cite[Proposition 3.1]{Hay} with $X = A_0$ that $q A_0$ is closed.   Indeed if $\varphi \in A_0
^{\perp}$ 
then since $qA_0 \subset A_0^{\perp \perp}$ it follows that $\varphi \in (q A_0)_\perp$.   So $q A_0$ is closed by \cite[Proposition 3.1]{Hay}.  Hence, or similarly, $q \in B_0^{\perp \perp}$ and is a closed projection there, and  $q B_0$ is closed.
\end{proof}

{\bf Remark.} Note that $B_0^{\perp \perp} \subset B^{**} \cap \{ q \}'$ clearly, however one can show that these sets differ in general.   Similarly, $q B_0^{\perp \perp} \neq q(B^{**} \cap \{ q \}') = q B^{**}q$.

\bigskip

The following contains the desired characterization of `peak interpolation-sets', as discussed 
in the second paragraph after Theorem \ref{peakthang22}.  Indeed (iii) is precisely the class of projections
discussed there, namely the projections corresponding to the (noncommutative version of the) extension property in the Bishop peak interpolation-set theorem.  
Item (vi) is a weaker version of this peak interpolation-set  extension property,  while         (v) is saying that $q$ is a noncommutative `interpolation-set'. 
Items (i), (ii), and (iv) are noncommutative reformulations of the classical condition $\mu_E = 0$ (or equivalently $\chi_E \, d \mu = 0$) for all measures $\mu \in A^\perp$ in Bishop's original theorem.
 We remark that  the `noncommutative null sets' in   (iv)   were first considered by Clou\^atre and Timko in \cite{CT}.

\begin{theorem} \label{peakthang}  {\rm (Noncommutative `Bishop peak interpolation-set' result)} \  Suppose that $A$ is a unital operator algebra, 
a subalgebra of a unital $C^*$-algebra $B$.
Suppose that  $q$ is a closed projection in $B^{**}$, and that 
 $B_0 = \{ b \in B : b q= qb \}$, and  $A_0 = A \cap B_0$.   The following are equivalent: 
\begin{itemize} \item [(i)] 
 $q B_0 \subset A_0^{\perp \perp}$.
 \item [(ii)] $A_0^{\perp} \subset (q B_0)_\perp$ 
 (or equivalently $A_0^{\perp} \subset q B^{**}$). 
 \item [(iii)]  If $b \in B_0$ 
 and $q b^* b q  \leq q d$ for an invertible positive $d \in B$ which commutes with $q$,
then  there exists an element $a \in A$ with $a q =  q a = b q$, and $a^* a \leq d$. 
 \item [(iv)]  $q$ is $A_0$-null (that is, if  $\varphi \in A_0^{\perp}$ then $|\varphi|(q) = 0$).
 \end{itemize} 
 If the above all holds, then $q$   is a $p$-projection for $A$ (or equivalently $q \in A^{\perp \perp}$, or equivalently $q$ is $A$-closed), $q A_0$ is a $C^*$-algebra,
 and we also have 
  \begin{itemize}   \item [(v)]   $q B_0 = q A_0$.
\item [(vi)] If $b \in B_0$ and $\| b q \|  \leq 1$ then  there exists an element $a \in {\rm Ball}(A)$ 
with $a q =  q a = b q$.
 \end{itemize} 
Item {\rm (vi)} implies {\rm (v)}.  If $q$   is a $p$-projection for $A$ then 
 items {\rm (i)}--{\rm (vi)} are all equivalent. \end{theorem}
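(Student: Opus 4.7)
The overall strategy is to reduce to the existing Hay-type interpolation theorem \cite[Proposition 3.4]{Hay} by working with the commutant subalgebras $A_0 \subset B_0$ in place of $A \subset B$.  The conceptual observation that makes this work is that although $q$ is not assumed central in $B^{**}$, it is automatically central in $B_0^{**}$: by Lemma \ref{qinz} we have $q \in B_0^{\perp\perp}$, and any weak* limit of elements of $B_0$ (each of which commutes with $q$) still commutes with $q$ by separate weak* continuity of multiplication in $B^{**}$.  Thus $B_0^{\perp\perp} \subset \{q\}'$, which is precisely the structural input Hay's proof of Proposition~3.4 requires.

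The equivalence (i) $\Leftrightarrow$ (ii) is the standard bipolar duality.  From (i), taking $b = 1 \in B_0$ gives $q = q \cdot 1 \in qB_0 \subset A_0^{\perp\perp} \subset A^{\perp\perp}$; since $q$ is closed in $B^{**}$ this makes $q$ an $A$-closed (hence $p$-) projection.  Given (ii), I apply \cite[Proposition 3.4]{Hay} to the pair $A_0 \subset B_0$: its hypothesis $A_0^\perp \subset (qB_0)_\perp$ (now read in $B_0^*$) follows from (ii) by restriction and Hahn--Banach extension of functionals, and the required centrality of $q$ in $B_0^{**}$ was established above.  The conclusion is that for every $b \in B_0$ there is $a \in A_0$ with $aq = qa = bq$ and $qa^*aq = qb^*bq$, which establishes (v).  The $C^*$-algebra claim on $qA_0 = qB_0$ is then immediate: $qB_0$ is norm closed by Lemma \ref{qinz} and is a unital $*$-subalgebra of $qB^{**}q$ via $(qb)(qb') = q(bb')$ and $(qb)^* = qb^*$ (using $bq = qb$, and hence $qb^* = b^*q$).

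Statement (iii) now follows by combining (v) with Theorem \ref{peakthang22}: given $b \in B_0$ with $qb^*bq \leq qd$, pick $a \in A_0$ via (v) with $aq = qa = bq$ and $qa^*aq = qb^*bq \leq qd$; since $a \in A$ commutes with $q$ and $q$ is a $p$-projection for $A$, Theorem \ref{peakthang22} supplies $g \in A$ with $gq = qg = aq = bq$ and $g^*g \leq d$.  Item (vi) is the case $d = 1$.  For the converses stated at the end of the theorem, (vi) $\Rightarrow$ (v) by applying (vi) to $b/\norm{bq}$ and noting that any $a \in \mathrm{Ball}(A)$ with $aq = qa$ automatically lies in $A_0$; and under the additional hypothesis that $q$ is a $p$-projection, (v) $\Rightarrow$ (i) follows from Lemma \ref{qinz} (which supplies $q \in A_0^{\perp\perp}$) together with the fact that $A_0^{\perp\perp}$ is a subalgebra, giving $qB_0 = qA_0 \subset A_0^{\perp\perp}$.

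Finally, the equivalence (iv) $\Leftrightarrow$ (i)--(ii) is handled via the standard polar-decomposition characterization: for $\varphi \in B^*$ with polar decomposition $\varphi = u|\varphi|$ and a projection $q \in B^{**}$, the equation $|\varphi|(q) = 0$ is equivalent to the vanishing of $\varphi$ on the natural $q$-corner inside $B^{**}$ (one direction by Cauchy--Schwarz for the positive functional $|\varphi|$, the other by testing against $u^*$).  The direction (iv) $\Rightarrow$ (ii) is then immediate, while (ii) $\Rightarrow$ (iv) can be bootstrapped from (v) together with the self-adjoint structure of $qA_0 = qB_0$ proved above.  I expect the main obstacle to be precisely the Hay-step producing (v): passing from the weak* containment $qB_0 \subset A_0^{\perp\perp}$ to the strictly stronger norm identity $qB_0 = qA_0$ depends on $q$ being central in $B_0^{**}$, and recognizing that this centrality is automatic (even though $q$ need not be central in $B^{**}$) is the key insight that dispenses with the ``unappealing hypothesis'' lamented in the introduction.
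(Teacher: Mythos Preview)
Your overall route matches the paper's: reduce to the pair $A_0\subset B_0$ via Lemma~\ref{qinz}, invoke \cite[Proposition~3.4]{Hay} to obtain (v), feed the resulting $a_0\in A_0$ into Theorem~\ref{peakthang22} to obtain (iii), and handle (iv) by polar decomposition plus Cauchy--Schwarz. Your emphasis on $q$ being central in $B_0^{\perp\perp}$ is precisely the content of the Remark after Lemma~\ref{qinz}, and the paper uses it in the same way (implicitly, through the citation of Hay).

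There is, however, one genuine gap in your logical loop. The theorem asserts the \emph{unconditional} equivalence of (i)--(iv), so you must get from (iii) back to one of (i), (ii), (iv) without assuming in advance that $q\in A^{\perp\perp}$. Your chain runs (iii) $\Rightarrow$ (vi) $\Rightarrow$ (v), and then (v) $\Rightarrow$ (i) only after you have $q\in A_0^{\perp\perp}$ via Lemma~\ref{qinz}; but that lemma requires as hypothesis that $q$ be $A$-closed, i.e.\ $q\in A^{\perp\perp}$, something you have derived only from (i), not from (iii). Applying (iii) with $b=1$, $d=1$ yields some $a\in\mathrm{Ball}(A)$ with $aq=qa=q$, but a single such element does not by itself place $q$ in $A^{\perp\perp}$. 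The paper closes this gap by invoking the known characterization of $p$-projections: the conclusion of Theorem~\ref{peakthang22} with $b=1$ (ranging over all admissible $d$), which is a special case of (iii), already forces a closed projection in $B^{**}$ to be a $p$-projection for $A$; the reference given is \cite[Theorem~5.10]{Hay}. You should insert this step, after which your (iii) $\Rightarrow$ (vi) $\Rightarrow$ (v) $\Rightarrow$ (i) chain becomes valid and the proof is complete.
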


\begin{proof}   Note that  $q B_0 \subset A_0^{\perp \perp}$ if and only if $\overline{q B_0}^{w*} = q B_0^{\perp \perp} \subset A_0^{\perp \perp}$;
and  if and only if  $A_0^{\perp} \subset (q B_0^{\perp \perp})_\perp$. 
The equivalence of (i) and (ii) follow from the bipolar theorem, or by taking upper and lower $\perp$'s.
Note that since $1$ is in $B_0$ (or $B_0^{\perp \perp}$), (i) and (ii) force $q \in A^{\perp \perp}$.   At the start of this section we discussed the equivalences between $B$-closed 
projections in $A^{\perp \perp}$, $A$-closed projections, and  $p$-projections for $A$.  

By \cite[Proposition 3.4]{Hay}, (ii) implies (v).
Conversely, suppose that $q \in A^{\perp \perp}$.  Then $q$ is
$A$-closed as we just said, and $q \in A_0^{\perp \perp}$ 
by Lemma \ref{qinz}.
Then (v) implies that $$q B_0 = q A_0 \subset A_0^{\perp \perp} A_0 \subset
A_0^{\perp \perp} .$$  Thus (i) holds.  Suppose in addition that $b \in B_0$ 
 and $q b^* b q  \leq q d$ for an invertible positive $d \in B$ which commutes with $q$.  Then there exists an element $a_0 \in A_0$ with $a_0 q = b q$, so that
 $q a_0^* a_0 q  \leq q d$.   
 By Theorem  \ref{peakthang22} there exists an element $a \in A_0$ with $a q = a_0 q = b q$, and $a^* a \leq d$.   
We have shown that (v) implies (iii) if $q \in A^{\perp \perp}$, and hence also that (ii) implies (iii) (since  (ii) implies (v) and $q \in A^{\perp \perp}$).   

That (iii) implies that  $q \in A^{\perp \perp}$ follows essentially from the principle that the Gamelin-Bishop theorem characterizes 
$p$-sets, or that  in the noncommutative case  the condition in the last sentence of Theorem  \ref{peakthang22} with $b = 1$ characterizes $p$-projections (see e.g.\ \cite[Theorem 5.10]{Hay}
for an even better result).    So by the above, (iii) implies (i). 
Clearly (iii) with $d = 1$ implies (vi). Conversely,  (vi) implies (v) by scaling.   Note that $q B_0$ is a $C^*$-algebra since it is the range of the map of multiplication by $q$, which is a 
$*$-homomorphism from $B_0$ into its bidual.

The equivalences involving (iv) may be deduced from Clou\^atre and Timko's proof of \cite[Theorem 6.2]{CT}, and follow from properties of the polar decomposition 
 of a linear functional as may be found in basic $C^*$-algebra 
texts (e.g.\ 3.6.7 in \cite{Ped} applied to the bidual).
  We include the proofs for completeness:
Suppose that $q$ is $A_0$-null  and $\varphi \in A_0^{\perp}$ with polar decomposition $\varphi  = u |\varphi |$ with $u \in B_0^{**}$.  
By Cauchy-Schwartz we have 
$$|\varphi(q b) |^2  =  |\varphi |(q b u ) ^2   \leq C |\varphi|(q) = 0 , \qquad b \in B_0 .$$
So (ii) holds.
Conversely if (ii) holds then, as we said above, we have (i) and (v) and also $q \in A_0^{\perp \perp}$.  
Suppose that $\varphi \in A_0^{\perp}$  with polar decomposition $\varphi  = u |\varphi |$  as above.  We have $q u^* \in q B_0^{**} 
 \subset A_0^{\perp \perp}.$
Thus  $|\varphi |(q) = \varphi(q u^*) = 0$.   So (iv) holds.  
  \end{proof}

 Note that $A_0 q$ is actually a $C^*$-algebra in the setting above, just as in the commutative case where $q = \chi_E$ for a closed set $E \subset K$, and $A_0 q = C(E)$.  
 Unfortunately $A q$ is not generally a $C^*$-algebra, which seems to be further evidence for the consideration of $A_0$ in place of $A$ in certain such results.
 
\begin{corollary} \label{peakthstr}  {\rm (Alternative noncommutative `Bishop peak interpolation-set' result in 
special cases)} \  Suppose that $A$ is a unital operator algebra, 
a subalgebra of a unital $C^*$-algebra $B$.
Suppose that  $q$ is a closed projection in $B^{**}$. 
As in the previous result (see also {\rm \cite[Theorem 6.2]{CT}}), $q B \subset A^{\perp \perp}$
if and only if $A^{\perp} \subset (q B)_\perp 
= (q B^{**})_\perp$, and if and only if 
$q$ is $A$-null (that is,  if  $\varphi \in A^{\perp}$ then $|\varphi|(q) = 0$).  Moreover these conditions are equivalent to: 
 $qB = qA$ with $q \in A^{\perp \perp}$.  
If these conditions all  hold then so does  the `Bishop peak interpolation-set' result  in {\rm (iii)} 
of the previous theorem, 
under any one of the following extra hypotheses: 
\begin{itemize} \item [(1)] 
 $q \in B$.
 \item [(2)] $A + B_0$ is norm closed (or equivalently: every functional in $A_0^\perp$ extends to 
 a functional in $A^\perp$). 
 \item [(3)]  $q$ is central in $B^{**}$. 
 \item [(4)]   $q$ is  a minimal projection in $B^{**}$.
  \item [(5)]  $A$ is commutative.
  \end{itemize} 
\end{corollary}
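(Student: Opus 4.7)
The plan is to first establish the four initial equivalences and then to show that each of the extra hypotheses (1)--(5) reduces the desired conclusion to Theorem~\ref{peakthang}. The bipolar-theorem proof of (i)$\Leftrightarrow$(ii) and the polar-decomposition proof of (ii)$\Leftrightarrow$(iv) in the previous theorem go through verbatim with $B_0$ replaced by $B$ and $A_0$ by $A$, since those particular arguments never invoked the commutation structure. Any one of these conditions evaluated on $q = q \cdot 1$ forces $q \in A^{\perp\perp}$; combined with the assumption that $q$ is closed, \cite[Proposition 3.4]{Hay} then yields $qB = qA$, and conversely $qB = qA$ with $q \in A^{\perp\perp}$ gives $qB \subset A^{\perp\perp} A \subset A^{\perp\perp}$.

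For cases (3)--(5) the plan is to reduce directly to condition (i) of Theorem~\ref{peakthang}. Under (3), $B_0 = B$ and $A_0 = A$, and there is nothing to do. Under (5), $A^{\perp\perp}$ is commutative (sitting inside the commutative von Neumann subalgebra of $B^{**}$ generated by $A$), so $q \in A^{\perp\perp}$ commutes with every $a \in A$, forcing $A = A_0$; hence $qB_0 \subset qB \subset A^{\perp\perp} = A_0^{\perp\perp}$. Under (4), minimality of $q$ in $B^{**}$ gives $qB^{**}q = \Cdb q$, and for $b \in B_0$ we have $qb = qbq \in \Cdb q \subset A_0^{\perp\perp}$ using $q \in A_0^{\perp\perp}$ from Lemma~\ref{qinz}. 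For (2), I would take $\varphi \in A_0^\perp$ and extend it to $\tilde\varphi \in A^\perp$ via the hypothesis; by Lemma~\ref{qinz}, $q \in B_0^{\perp\perp}$, so for $b \in B_0$ the product $qb$ lies in $B_0^{\perp\perp}$, and the canonical weak*-continuous extensions of $\varphi$ and $\tilde\varphi$ to $B_0^{\perp\perp}$ must agree by uniqueness of weak*-continuous extension; thus $\varphi(qb) = \tilde\varphi(qb) = 0$ since $qb \in qB \subset A^{\perp\perp}$. In each of (2)--(5), Theorem~\ref{peakthang} then delivers (iii).

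The main obstacle is case (1). Given $b \in B_0$ with $qb^*bq \leq qd$, I would use $qB = qA$ to find $a_1 \in A$ with $qa_1 = bq$, and then symmetrise by setting $a_2 = qa_1q + (1-q)a_1(1-q)$. Because $q \in B$ we have $a_2 \in B$; because $q \in A^{\perp\perp}$ and $A^{\perp\perp}$ is a subalgebra, $a_2 \in A^{\perp\perp}$. The standard identity $A^{\perp\perp} \cap B = A$ (from the bipolar theorem and norm-closedness of $A$) then places $a_2 \in A$, and a direct computation gives $qa_2 = a_2 q = q a_1 q = (bq)q = bq$, putting $a_2 \in A_0$. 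Finally, applying Theorem~\ref{peakthang22} to $a_2$, with $q a_2^* a_2 q = qb^*bq \leq qd$, produces $a \in A$ with $aq = qa = bq$ and $a^*a \leq d$. The delicate move is the averaging step producing an element of $A_0$ rather than merely of $A$; everything else is routine.
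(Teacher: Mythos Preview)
Your argument is correct overall and follows the same broad strategy as the paper: establish the initial equivalences exactly as in Theorem~\ref{peakthang} (with $B_0$, $A_0$ replaced by $B$, $A$), and then for each of (1)--(5) feed back into Theorem~\ref{peakthang} or Theorem~\ref{peakthang22}. Cases (3), (4), (5) match the paper essentially verbatim.

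The differences worth noting are in (1) and (2). For (1) the paper observes immediately that $q\in B\cap A^{\perp\perp}=A$, and then verifies condition~(i) of Theorem~\ref{peakthang} in one line:
\[
qB_0\subset qBq\subset qA^{\perp\perp}q=(qAq)^{\perp\perp}\subset A_0^{\perp\perp}.
\]
Your symmetrisation $a_2=qa_1q+(1-q)a_1(1-q)$ is a correct and perfectly natural alternative, but once you know $q\in A$ it is doing by hand what the displayed inclusion does at the level of biduals; your passage through $A^{\perp\perp}\cap B=A$ is then unnecessary, since $a_2\in A$ directly. For (2) you argue via the functional-extension formulation (this is exactly the route sketched in the paper's Remark~2 following the Corollary), whereas the paper's proof uses the closedness of $A+B_0$ together with the identity $A^{\perp\perp}\cap B_0^{\perp\perp}=A_0^{\perp\perp}$ (from \cite[Lemma I.1.14(a)]{HWW}) to verify condition~(i) directly. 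Both routes are short and equivalent in strength.

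One genuine omission: the parenthetical in item~(2) asserts that ``$A+B_0$ is norm closed'' is \emph{equivalent} to ``every $\varphi\in A_0^\perp$ extends to $A^\perp$'', and this equivalence is part of the Corollary's content. You use only the second formulation and never prove the equivalence. The paper supplies this via a duality/closed-range argument (the restriction map $A^\perp\to A_0^\perp$ is onto iff its predual map $B_0/A_0\to B/A$ is bicontinuous, iff $(A+B_0)/A$ is closed in $B/A$, iff $A+B_0$ is closed). You should add this step.
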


\begin{proof}   The 
first two `if and only ifs' follow almost exactly as in the previous proof.   
Note that  $q B \subset A^{\perp \perp}$ if and only if $\overline{q B}^{w*} = q B^{**} \subset A^{\perp \perp}$, and  if and only if 
$A^{\perp} \subset (q B^{**})_\perp$. 
The equivalence with $q \in A^{\perp \perp}$ and $qB = qA$ is noted in  \cite[Theorem 6.2]{CT}.  Indeed one direction of this is obvious and as in the last proof: these conditions imply
$qB = qA \subset A^{\perp \perp} A \subset A^{\perp \perp}$.
The other direction is immediate from \cite[Proposition 3.4]{Hay}.   (There was a mistake in the statement and proof  of this equivalence 
  in the ArXiV version of \cite{CT}, which we pointed out to Clou\^atre and has been corrected in the published version.) 
Finally we show that each of  conditions (1)--(5) imply that the conditions in the last theorem are satisfied.

(2) \ If $A + B_0$ is norm closed then by a principle in functional analysis, $A^{\perp \perp} \cap B_0^{\perp \perp} = (A \cap B_0)^{\perp \perp} = A_0^{\perp \perp}$   
(this follows easily for example from \cite[Lemma I.1.14(a)]{HWW}).   
Hence $q B_0 \subset A^{\perp \perp}$ or $q B  \subset A^{\perp \perp}$ implies 
$$q B_0 \subset A^{\perp \perp} \cap B_0^{\perp \perp} = A_0^{\perp \perp}.$$ 
So the conditions in the last theorem are satisfied.    The other equivalence here is no doubt also a principle in functional analysis known in some 
quarters, which we now explain.   Indeed if the restriction map from $B^*$ to $B_0^*$ maps $A^\perp$   onto $A_0^\perp$,
then it is the dual of a bicontinuous injection $(A_0^\perp)_* = B_0/A_0$ into 
$(A^\perp)_* = B/A$.       This injection is the canonical map $b_0 + A_0 \mapsto b_0 + A$,
since the dual of the latter is the restriction map.    
This argument is reversible: if the canonical map $b_0 + A_0 \mapsto b_0 + A$ is bicontinuous then 
 the restriction map from $B^*$ to $B_0^*$ maps $A^\perp$   onto $A_0^\perp$.
By the closed range theorem applied to the canonical map $A+B_0 \to (A+B_0)/A \subset B/A$, if $A+B_0$ is closed then $(B_0 + A)/A$ is 
isomorphic to $B_0/A_0$ (see \cite[Lemma I.1.14]{HWW} if needed).   Conversely, if the canonical map $b_0 + A_0 \mapsto b_0 + A$
is bicontinuous then $(B_0 + A)/A$ is closed in $B/A$, so that $A+B_0$ is closed. 

(1) \ If $q \in B$ and $q B  \subset A^{\perp \perp}$  then  $q \in B \cap A^{\perp \perp} = A$.  Then 
 $$q B_0 \subset 
q A^{\perp \perp} q =  (qAq)^{\perp \perp} = \{ a \in A : a = qa = aq \}^{\perp \perp} 
\subset A_0^{\perp \perp}.$$ 
So again the conditions in the theorem are satisfied.    Alternatively, 
we can apply  \cite[Proposition 3.4]{Hay}  and \cite[Theorem 3.4]{Bnpi}  to $B_0$ and $qAq$.

(3) \ This is obvious e.g.\ from Theorem \ref{peakthang}.  

 (4) \ We have $q \in A^{\perp \perp}$
and so $q B_0 = \Cdb q \subset A_0^{\perp \perp}$ by  the lemma. 

(5) \ Suppose that $A$ is commutative and $A^{\perp} \subset (q B)_\perp$.   Then $q \in A^{\perp \perp}$ as above and the latter is also commutative.  So $A = A_0$
and $A_0^{\perp} \subset (q B)_\perp \subset (q B_0)_\perp$.
This does it.  Alternatively, by the argument towards the end of the paragraph after Theorem \ref{peakthang} there exists $a \in A$ with $aq = qa = bq$ 
and we may conclude as in that argument. 
\end{proof}

{\bf Remarks.} \ 1) \ In Bishop's result $A$ need not be an algebra.   One may therefore hope to extend some of our results above to the  unital operator space setting.   Note that 
\cite[Proposition 3.4]{Hay}, an ingredient above, does not need $A$ to be an algebra.

2)	\ One approach to replacing $A_0$ by $A$, is to try to find conditions under which $\varphi \in A_0^\perp$ 
implies that there is an extension $\tilde{\varphi} \in A^\perp$.     If this holds and if  $A^\perp \subset 
(q B)_\perp$, then it is easy to check condition (ii) in Theorem \ref{peakthang} directly.   Indeed suppose that $\varphi \in A_0^\perp$.   
If it has an extension $\tilde{\varphi} \in A^\perp$ then 
by hypothesis $\tilde{\varphi} \in (q B)_\perp$, so that $\varphi \in (q B_0)_\perp$ (using also the lemma above).  

We thank Ken Davidson for suggesting to us in the discussions mentioned earlier that perhaps there always exists an extension 
of functionals in $A_0^\perp$ to $A^\perp$,
if and only if  $A+B_0$ is closed.

3) \ We  do not know if $qB  \subset A^{\perp \perp}$ implies the conditions in  Theorem \ref{peakthang} in all cases, nor do we have a counterexample at this time.  
By symmetry if $q$ is $A$-null then we have both $qB  \subset A^{\perp \perp}$  and $Bq  \subset A^{\perp \perp}$ .
A main difficulty that arises is that by the argument towards the end of the paragraph after Theorem \ref{peakthang} 
one may obtain `left interpolating' elements $a_1$ and `right interpolating' elements $a_2$.  We have  
$a_1 q = q a_2$, but one really needs $a_1 = a_2$ and we have not been able to spot the trick to ensure this (except under strong hypotheses).

4) \ It seems possible that the ideas in \cite[Corollary 5.4]{BRII} (taking $b$ there in $B$) might give another noncommutative   variation of Bishop's peak interpolation-set theorem.

5) \ In many cases discussed in \cite{CD,CD2,DH} the algebra $A$ is commutative so that (5) in the last theorem applies.

\end{document}